\documentclass{llncs}
\usepackage{epsfig}

\newcommand{\R}{{I\!\!R}}

\newcommand{\be}{\begin{equation}}
\newcommand{\ee}{\end{equation}}
\newcommand{\ba}{\begin{eqnarray}}
\newcommand{\ea}{\end{eqnarray}}
\newcommand{\nn}{\nonumber}
\newcommand{\la}{\label} 
\newcommand{\ep}{\Delta t}

\newcommand{\e}{{\rm e}} 
\newcommand{\ct}{{\cal T}}

\newcommand{\bq}{{\bf q}}

\newcommand{\bac}{{\bf a}}

\def\R{{\rm I}\! {\rm R}}
\newcommand{\bp}{{\bf p}}
\newcommand{\pa}{\partial}

\newcommand{\bv}{{\bf v}}

\def\R{{\rm I}\! {\rm R}}

\def\X{{\bf X}}

\usepackage{amsmath,amssymb}
\usepackage{amsfonts}
\usepackage{makeidx}

\def\O{{\mathcal O}}

\begin{document}

\pagestyle{headings}

\title{Multi-product expansion for Nonlinear Differential Equations}
\author{J\"urgen Geiser}
\institute{
\email{juergen.geiser@uni-greifswald.de}}
\maketitle

\begin{abstract}

In this paper we discuss the extention of MPE methods to 
nonlinear differential equations.
We concentrate on nonlinear systems of differential equations and
generalize the recent MPE method, see \cite{chin2010}.

\end{abstract}

{\bf Keyword} Suzuki's method, Multi-product expansion, exponential splitting, non-autonomous systems, nonlinear differential equations. \\

{\bf AMS subject classifications.} 65M15, 65L05, 65M71.

\section{Introduction}

In this paper we concentrate on approximation
to the solution of the nonlinear evolution equation, e.g. 
nonlinear evolution equation,
\begin{eqnarray}
\label{equ_exact_1}
&&  \partial_t \; u = F(u(t)) = A(u(t)) + B(u(t)),  \; u(0) = u_0 ,
\end{eqnarray}
with the unbounded operators $A: D(A) \subset \X \rightarrow \X$
and  $B: D(B) \subset \X \rightarrow \X$.
We have further $F(v) = A(v) + B(v), v \in D(A) \cap D(B) $.

We assume to have suitable chosen subspaces of the underlying
Banach space $(X, || \cdot ||_{X})$ such that $ D(F) = D(A) \cap D(B) \neq \emptyset  $

For such equations, we concentrate on
comparing the higher order methods to Suzuki's schemes.
Here the Suzuki's methods apply factorized symplectic algorithms
with forward derivatives, see \cite{chin02}, \cite{chin06}.

The exact solution of the evolution problem (\ref{equ_exact_1}) is given as:
\begin{eqnarray}
\label{equ1}
&&  u(t) = {\cal E}_F(t, u(0)) , \; 0 \le t \le T ,
\end{eqnarray}
with the evolution operator ${\cal E}_F$ depending on the actual time $t$
and the initial value $u(0)$.

We apply a formal notation given as:
\begin{eqnarray}
\label{equ1}
&&  u(t) = \exp( t D_F)  u(0) , \; 0 \le t \le T ,
\end{eqnarray}

Here the evolution operator $\exp(t D_F)$ and the Lie-derivative $D_F$ associated with $F$ are given as:
\begin{eqnarray}
\label{equ1}
&&  \exp( t D_F) G v = G({\cal E}_F(t, v)) , \; 0 \le t \le T , \; D_F G v = G'(V) F(v)
\end{eqnarray}
for any unbounded nonlinear operator $G: D(G) \subset X \rightarrow X$ with
Frechet derivative $G'$.

\section{Nonlinear Exponential operator splitting methods}

In the course of devising numerical algorithms for solving
 the prototype nonlinear differential equations
\be
\partial_t u = A(u) + B(u), \qquad u(0) = u_0 ,
\la{equ1}
\ee
where $A$ and $B$ are non-commuting operators,

Strang\cite{strang68} proposed two second-order algorithms corresponding 
to approximating
\be
{\cal T}(\ep)=\e^{\ep(D_A + D_B)}
\la{expab}
\ee
either as
\be
S(\ep)=\frac12\left( \e^{\ep D_A}\e^{\ep D_B}+\e^{\ep D_B}\e^{\ep D_A}\right)
\la{str1}
\ee
or as
\be
S_{AB}(\ep)=\e^{(\ep/2) D_B}\e^{\ep D_A} \e^{(\ep/2) D_B}.
\la{str2}
\ee
Following up on Strang's work, Burstein and Mirin\cite{bur70} suggested
that Strang's approximations can be generalized to higher orders
in the form of a multi-product expansion (MPE),
\be
{\rm e}^{\ep(D_A + D_B)}=\sum_k c_k
\prod_{i}
{\rm e}^{a_{ki}\ep D_A}{\rm e}^{b_{ki}\ep D_B}
\label{mprod} 
\ee
and gave two third-order approximations
\be
D(\ep)=\frac43 \left(\frac{S_{AB}(\ep)+S_{BA}(\ep)}{2}\right)-\frac13 S(\ep)
\la{dun}
\ee
where $S_{BA}$ is just $S_{AB}$ with $A\leftrightarrow B$ interchanged, and
\be
B_{AB}(\ep)=\frac98 \e^{(\ep/3) D_A}\e^{(2\ep/3) D_B}\e^{(2\ep/3) D_A}\e^{(\ep/3) D_B}
-\frac18 \e^{\ep D_A}\e^{\ep D_B}.
\la{bmsch}
\ee 
They credited J. Dunn for finding the decomposition $D(\ep)$ and
noted that the weights ${c_k}$ are no longer positive beyond 
second order. Thus the  stability of the entire algorithm can no longer be 
inferred from the stability of each component product.

\section{Nonlinear Multi-product decomposition}
\label{mpedec}

The multi-product decomposition (\ref{mprod}) is obviously more complicated 
than the single product splitting. 

By the way after Burstein and Mirin, Sheng\cite{sheng89} proved their
observation that beyond second-order, $a_{ki}$, $b_{ki}$ and $c_k$ cannot all be
positive.
Some possibilities are the idea of complex coefficients
to obtain a higher order scheme.

For general applications, including solving time-irreversible problems, 
one must have $a_{ki}$ and $b_{ki}$ positive.

Here we show, that we obtain a extrapolation scheme, that can 
overcome such problems.

Therefore every single product in (\ref{mprod}) can at most be 
second-order\cite{sheng89,suzuki91}. But such a product is
easy to construct, because every left-right symmetric single product {\it is} 
second-order. Let ${\cal T}_S(h)$ be such a product with
$\sum_i a_{ki}=1$ and $\sum_i b_{ki}=1$, then
${\cal T}_S(h)$ is time-symmetric by construction, 
\be
{\cal T}_S(-h){\cal T}_S(h)=1,
\ee
implying that it has only odd powers of $h$
\be
{\cal T}_S(\ep)=\exp(\ep (D_A + D_B)+\ep^3 E_3+\ep^5 E_5+\cdots)
\la{secerr}
\ee
and therefore correct to second-order.
(The error terms $E_i$ are nested commutators of $D_A$ and $D_B$
depending on the specific form of $\ct_S$.)
This immediately suggests that the $k$th power of ${\cal T}_S$ at 
step size $h/k$ must have the form 
\be
{\cal T}_S^k(\ep/k) 
=\exp(\ep (D_A+D_B)+k^{-2}\ep^3 E_3+k^{-4}\ep^5 E_5+\cdots),
\la{thk}
\ee
and can serve as a basis for the multi-production expansion (\ref{mprod}).
The simplest such symmetric product is  
\be
{\cal T}_2(h)=S_{AB}(h) \quad{\rm or}\quad {\cal T}_2(h)=S_{BA}(h).
\ee
If one naively assumes that
\be
{\cal T}_2(h)=\e^{\ep(D_A+D_B)}+ Ch^3+Dh^4+\cdots,
\ee
then a Richardson extrapolation would only give
\be
\frac1{k^2-1}\left[k^2 \ct_2^k(h/k)-\ct_2(h)\right]=\e^{\ep(D_A+D_B)}+ O(h^4),
\ee
a third-order\cite{schat94} algorithm. However, because the error structure
of $\ct_2(h/k)$ is actually given by (\ref{thk}), one has
\be
{\cal T}_2^k(h/k)=\e^{\ep(D_A + D_B)}+ k^{-2}h^3E_3+\frac12 k^{-2}h^4[(D_A+D_B)E_3+E_3(D_A+D_B)]+O(h^5),
\ee
and {\it both} the third and fourth order errors can be eliminated simultaneously, 
yielding a fourth-order algorithm. Similarly, the leading $2n+1$ and $2n+2$ order 
errors are multiplied by $k^{-2n}$ and can be eliminated at the same time. Thus 
for a given set of $n$ whole numbers $\{k_i\}$ one can have a $2n$th-order approximation  
\be
{\rm e}^{\ep(D_A+D_B)}
=\sum_{i=1}^n c_i{\cal T}_2^{k_i}\left(\frac\ep{k_i}\right)
+O(h^{2n+1}).
\la{mulexp}
\ee
provided that $c_i$ satisfy the simple Vandermonde equation: 
\be
\left(
\begin{array}{c c c c c}
1 & 1 & 1 & \ldots & 1 \\
k_1^{-2} &  k_2^{-2} & k_3^{-2} & \ldots & k_n^{-2} \\
k_1^{-4} &  k_2^{-4} & k_3^{-4} & \ldots & k_n^{-4} \\
\ldots & \ldots & \ldots & \ldots & \ldots \\
k_1^{-2 (n - 1)} &  k_2^{-2 (n - 1)} & k_3^{-2 (n - 1)} & \ldots & k_n^{-2 (n - 1)}
\end{array}
\right)
\left(
\begin{array}{c}
c_1 \\
c_2 \\
c_3 \\
\ldots \\
c_n
\end{array}
\right)
=
\left(
\begin{array}{c}
1 \\
0 \\
0 \\
\ldots \\
0
\end{array}
\right)
\la{vand}
\ee
Surprising, this equation has closed form solutions\cite{chin08} for all $n$
\be
c_i=\prod_{j=1 (\ne i)}^n\frac{k_i^2}{k_i^2-k_j^2}.
\la{coef}
\ee
The natural sequence $\{k_i\}=\{1, 2, 3 \,...\, n\}$ produces a $2n$th-order
algorithm with the minimum $n(n+1)/2$ evaluations of $\ct_2(h)$.
For orders four to ten, one has explicitly:
\be
{\cal T}_4(\ep)=-\frac13{\cal T}_2(\ep)
+\frac43{\cal T}_2^2\left(\frac\ep{2}\right)
\la{four}
\ee
\be
{\cal T}_6(\ep)=\frac1{24} {\cal T}_2(\ep)
-\frac{16}{15}{\cal T}_2^2\left(\frac\ep{2}\right)
+\frac{81}{40}{\cal T}_2^3\left(\frac\ep{3}\right)
\la{six}
\ee
\be
{\cal T}_8(\ep)=-\frac1{360} {\cal T}_2(\ep)
+\frac{16}{45}{\cal T}_2^2\left(\frac\ep{2}\right)
-\frac{729}{280}{\cal T}_2^3\left(\frac\ep{3}\right)
+\frac{1024}{315}{\cal T}_2^4\left(\frac\ep{4}\right)
\la{eight}
\ee
\ba
&&{\cal T}_{10}(\ep)=\frac1{8640} {\cal T}_2(\ep)
-\frac{64}{945}{\cal T}_2^2\left(\frac\ep{2}\right)
+\frac{6561}{4480}{\cal T}_2^3\left(\frac\ep{3}\right)\nn\\
&&\qquad\qquad\quad-\frac{16384}{2835}{\cal T}_2^4\left(\frac\ep{4}\right)
+\frac{390625}{72576}{\cal T}_2^5\left(\frac\ep{5}\right).
\la{ten}.
\ea
As shown in Ref.\cite{chin08}, $\ct_4(h)$ reproduces Nystr\"om's fourth-order 
algorithm with three force-evaluations
and $\ct_6(h)$ yielded a new
sixth-order Nystr\"om type algorithm with five force-evaluations.

\subsection{Error Analysis of the Nonlinear Multiproduct Expansion}

The error analysis is based on the following ideas:
\begin{itemize}
\item Definition of a new operator (linear operator)
\item Definition of the derivatives
\item Derivation of the splitting errors
\end{itemize}

The nonlinear equation is given as:
\begin{eqnarray}
\label{equ1}
&&  \partial_t \; u = F(u(t)) = A(u(t)) + B(u(t)),  \; u(0) = u_0 ,
\end{eqnarray}

We associate a new operator $\hat{F}$ which is a linear Lie operator:

\begin{definition}
For the given operator $F$ we associate a new operator,
denoted $\hat{F}$.

This Lie-operator acts on the space of the differentiable operators of the
type:

$ S \rightarrow S$ ,\\

and maps each operator $G$ into the new operator $\hat{F}(G)$, such that
for any element $c \in S$:
\begin{eqnarray}
\label{equ1}
&&  (\hat{F}(G))(c) = ( G'(c) \circ  F)(c) ,
\end{eqnarray}
\end{definition}

Here the derivatives are given as:

\begin{definition}
The $k$-th power of the Lie-operator $\hat{F}$ applied to some
operator $G$ can be expressed as the $k$-th derivative of $G$, that is the
relation
\begin{eqnarray}
\label{equ1}
&&  (\hat{F}^k(G))(c) = \frac{\partial^k G(c)}{\partial t^k}  ,
\end{eqnarray}
is valid for all $k = 1, 2, \ldots$.
\end{definition}

Let us splitt the operator $F$ into the sum $F_1 + F_2$

The splitting error for the A-B splitting is given as:

\begin{eqnarray}
\label{equ1}
&&  Err_{A-B} = \exp(\tau \hat{F}_1 + \hat{F}_2)(I) -  (\exp(\tau \hat{F}_1) \exp(\tau \hat{F}_2))(I)(c) ,
\end{eqnarray}
if we set in the commutator we obtain for $F_1$ and $F_2$
\begin{eqnarray}
\label{equ1}
&&  Err_{A-B, 1,2} = \tau (F_2'(c)  \circ F_1)(c) -F_1'(c)  \circ F_2)(c)) + O(\tau^2),
\end{eqnarray}

The Strang Splitting is given as:
\begin{eqnarray}
\label{equ1}
Err_{Strang} && = \exp(\tau \hat{F}_1 + \hat{F}_2)(I) \\
                && -  (\exp(\tau/2 \hat{F}_1) \exp(\tau \hat{F}_2) \exp(\tau/2 \hat{F}_1))(I)(c) , \nonumber
\end{eqnarray}
if we set in the commutator we obtain for $F_1$ and $F_2$
\begin{eqnarray}
\label{gleich_kap3_11}
&&  Err_{Strang, 1,2} = \frac{1}{24} \tau^2 ([F_2, [F_2, F_1]](c) - 2 [F_1, [F_1, F_2]](c)) + O(\tau^4) \; . 
\end{eqnarray}
with the commutator: $[F_1, F_2](c) = (F_2'(c)  \circ F_1)(c) - (F_1'(c)  \circ F_2)(c) $.

We derive the nonlinear MPE based on the definition of the linearized Lie-operator.

The derivation of the MPE method is given as:
\be
{\cal T}_2^k(h/k)=\e^{\ep(D_A + D_B)}+ k^{-2}h^3E_3+\frac12 k^{-2}h^4[(D_A+D_B)E_3+E_3(D_A+D_B)]+O(h^5),
\ee
and {\it both} the third and fourth order errors can be eliminated simultaneously, while $E_3$ and $E_4$ are given as commutators of the derivatives of $A$ and $B$, see \cite{chin2010}.

\section{Nonlinear Operator splitting methods}
\label{oper} 

In the literature, there are various types of splitting
methods.
 We mainly consider the following operators splitting schemes in this study:\\

{1. Sequential operator splitting: A-B splitting}
\begin{eqnarray}
\label{seqsp1}
 && \frac{\partial c^{*}(t)}{\partial t} = A(c^{*}(t)) \;\;\;\;\;
\mbox{with} \; t \in [t^{n},t^{n+1}]\;\;\;\ \mbox{and}\;\;\;\; c^{*}(t^n) = c_{sp}^{n} \\
\label{seqsp2}
 && \frac{\partial c^{**}(t)}{\partial t} = B( c^{**}(t)) \;\;\;\; \mbox{with} \; t \in [t^{n},t^{n+1}] \;\;\;\;
\mbox{and} \;\;\; c^{**}(t^n) =c^{*}(t^{n+1}),
\end{eqnarray}
for $n=0,1,...,N-1$ whereby $c_{sp}^{n}=c_0$ is given from
(\ref{Cauchy}). The approximated split solution at the point
$t=t^{n+1}$ is defined as $c_{sp}^{n+1}=c^{**}(t^{n+1}).$

{2. Strang-Marchuk operator splitting : A-B-A splitting }
\begin{eqnarray}
\label{str1}
 && \frac{\partial c^{*}(t)}{\partial t} = A(c^{*}(t)) \;
\mbox{with} \; t \in [t^{n},t^{n+1/2}] \;\;\;\; \mbox{and}\;\;\;\; c^{*}(t^n) = c_{sp}^{n} \\
 \label{str2} && \frac{\partial c^{**}(t)}{\partial t} =
B(c^{**}(t)) \; \mbox{with} \; t \in [t^{n},t^{n+1/2}] \;\;\;\;
\mbox{and}\;\;\;\; c^{**}(t^n) =
c^{*}(t^{n+1/2}) \; , \\
\label{str3}&& \frac{\partial c^{***}(t)}{\partial t} = A(c^{*}(t))
\; \mbox{with} \; t \in [t^{n+1/2},t^{n+1}] \;\;\;\; \mbox{and}\;
c^{***}(t^{n+1/2}) = c^{**}(t^{n+1}),
\end{eqnarray}
where $t^{n+1/2}= t^{n} + \tau/2,$   $\tau$ is the local time step.
The approximated split solution at the point $t=t^{n+1}$
is defined as $c_{sp}^{n+1}=c^{***}(t^{n+1}).$\\

{3. Iterative splitting with respect to one operator}\\

\begin{eqnarray}
&& \frac{\partial c_i(t)}{\partial t} = A(c_i(t)) \; + \; B(c_{i-1}(t)), \; \mbox{with} \; \; c_i(t^n) = c^{n}, i = 1, 2, \ldots,
m \; \label{iterone}
\end{eqnarray}

{4. Iterative splitting with respect to alternating operators}\\

\begin{eqnarray}
 && \frac{\partial c_i(t)}{\partial t} = A(c_i(t)) \; + \; B(c_{i-1}(t)), \; \label{itertwo_1}
\mbox{with} \; \; c_i(t^n) = c^{n} , \\
&& i = 1, 2, \ldots, j \; , \nonumber\\
 && \frac{\partial c_{i}(t)}{\partial t} =
A(c_{i-1}(t)) \; + \; B(c_{i}(t)), \;\;\;\;\; \mbox{with} \; \;
c_{i+1}(t^n) =
c^{n} , \label{itertwo_2} \\
&& i = j+1, j+2, \ldots, m \; , \nonumber
\end{eqnarray}

Here, $c_{0}(t^n) = c^n \; , \; c_{-1} = 0$ and $c^n$ is the known
split approximation at the time level $t=t^{n}$. The split
approximation at the time-level $t=t^{n+1}$ is defined as
$c^{n+1}=c_{2m+1}(t^{n+1})$.

\subsection{Error Analysis of the Classical Splitting Schemes}

We associate a new operator $\hat{F}$ which is a linear Lie operator:

\begin{definition}
For the given operator $F$ we associate a new operator,
denoted $\hat{F}$.

This Lie-operator acts on the space of the differentiable operators of the
type:

$ S \rightarrow S$ ,\\

and maps each operator $G$ into the new operator $\hat{F}(G)$, such that
for any element $c \in S$:
\begin{eqnarray}
\label{equ1}
&&  (\hat{F}(G))(c) = ( G'(c) \circ  F)(c) ,
\end{eqnarray}
\end{definition}

Here the derivatives are given as:

\begin{definition}
The $k$-th power of the Lie-operator $\hat{F}$ applied to some
operator $G$ can be expressed as the $k$-th derivative of $G$, that is the
relation
\begin{eqnarray}
\label{equ1}
&&  (\hat{F}^k(G))(c) = \frac{\partial^k G(c)}{\partial t^k}  ,
\end{eqnarray}
is valid for all $k = 1, 2, \ldots$.
\end{definition}

Let us splitt the operator $F$ into the sum $F_1 + F_2$

The splitting error for the A-B splitting is given as:

\begin{eqnarray}
\label{equ1}
&&  Err_{A-B} = \exp(\tau \hat{F}_1 + \hat{F}_2)(I) -  (\exp(\tau \hat{F}_1) \exp(\tau \hat{F}_2))(I)(c) ,
\end{eqnarray}
if we set in the commutator we obtain for $F_1$ and $F_2$
\begin{eqnarray}
\label{equ1}
&&  Err_{A-B, 1,2} = \tau (F_2'(c)  \circ F_1)(c) -F_1'(c)  \circ F_2)(c)) + O(\tau^2),
\end{eqnarray}

The Strang Splitting is given as:
\begin{eqnarray}
\label{equ1}
Err_{Strang} && = \exp(\tau \hat{F}_1 + \hat{F}_2)(I) \\
                && -  (\exp(\tau/2 \hat{F}_1) \exp(\tau \hat{F}_2) \exp(\tau/2 \hat{F}_1))(I)(c) , \nonumber
\end{eqnarray}
if we set in the commutator we obtain for $F_1$ and $F_2$
\begin{eqnarray}
\label{gleich_kap3_11}
&&  Err_{Strang, 1,2} = \frac{1}{24} \tau^2 ([F_2, [F_2, F_1]](c) - 2 [F_1, [F_1, F_2]](c)) + O(\tau^4) \; . 
\end{eqnarray}
with the commutator: $[F_1, F_2](c) = (F_2'(c)  \circ F_1)(c) - (F_1'(c)  \circ F_2)(c) $.

\section{Improving the Initialization of Operator Splitting Methods}\label{higher}

A delicate problem in splitting methods is to achieve sufficient accuracy
in the first splitting steps, see \cite{geiser2011}.

Next we discuss the extension to the improvement with
Zassenhaus formula.

\subsection{Higher order A-B splitting by Initialization }

The idea is based on the novel commutator that is given in Section \ref{oper}.

\begin{theorem}
We solve the initial value problem by using the method given in
equations (\ref{seqsp1}) and (\ref{seqsp2}). We assume bounded and
nonlinear operators $A$ and $B$.

The consistency error of the A-B splitting is $\mathcal{O}(t)$, then
we can improve the error of the A-B splitting scheme to
$\mathcal{O}(t^{p}), p> 1$ by improving the starting conditions
$c_0$ as

$$c_0= ( \pi_{j=2}^{p} \exp(\hat{C}_j t^j)) c_0 $$
where $\hat{C}_j$ is called a nonlinear Zassenhaus exponents 
that is given with the novel commutator, e.g. the linear
case is given in \cite{scholz06}.

The local splitting error of A-B splitting method
can be read as follows

\begin{eqnarray}
  \rho_n & =& (\exp(\tau_n (\hat{A}+\hat{B}))(I) - \exp(\tau_n \hat{B})(I) \exp(\tau_n \hat{A})(I) ) (c_{sp} ^{n}) \\ \nonumber
         & =& \hat{C}_T \,\, \tau_n^{p+1} + \mathcal{O}(\tau_n^{p+2}) \nonumber
\end{eqnarray}
where $\hat{C}_T$ is a function of nonlinear Lie brackets of $\hat{A}$ and $\hat{B}$.
\end{theorem}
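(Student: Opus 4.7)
The plan is to reduce the claim to the classical linear Zassenhaus identity by exploiting the linearization $F\mapsto\hat F$ of Section~\ref{mpedec}, and then to interpret the resulting formal identity back at the level of the nonlinear flow through $\Phi_{F}^{\tau}(c)=\exp(\tau\hat F)(I)(c)$.

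First I would note that $F\mapsto\hat F$ is a Lie algebra homomorphism from the nonlinear vector fields on $S$, equipped with the bracket $[F_{1},F_{2}](c)=(F_{2}'(c)\circ F_{1})(c)-(F_{1}'(c)\circ F_{2})(c)$ recorded in Section~\ref{oper}, into the associative algebra of linear operators on maps $S\to S$. Within that linear algebra the usual Zassenhaus formula
\begin{equation}
\exp(\tau(\hat A+\hat B))\;=\;\exp(\tau\hat A)\,\exp(\tau\hat B)\,\prod_{j\ge 2}\exp(\tau^{j}\hat C_{j})
\end{equation}
is a formal identity whose exponents $\hat C_{j}$ are nested linear Lie brackets in $\hat A,\hat B$ with coefficients determined recursively by the Baker--Campbell--Hausdorff series (for instance $\hat C_{2}=\tfrac12[\hat B,\hat A]$). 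By the homomorphism property each $\hat C_{j}=\widehat{C_{j}}$ for a nonlinear vector field $C_{j}$ built from the same nested nonlinear brackets of $A$ and $B$; these are the nonlinear Zassenhaus exponents of the statement.

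Next I would apply both sides to $I$ and evaluate at $c_{0}$. Since $\exp(\tau^{j}\hat C_{j})(I)(c)=\Phi_{C_{j}}^{\tau^{j}}(c)$, reading operator compositions right-to-left in the Lie formalism, the product of corrections acting on $I$ and evaluated at $c_{0}$ is
\begin{equation}
\tilde c_{0}\;=\;\Bigl(\prod_{j=2}^{p}\exp(\hat C_{j}t^{j})\Bigr)(I)(c_{0})\;=\;\Phi_{C_{p}}^{t^{p}}\circ\cdots\circ\Phi_{C_{2}}^{t^{2}}(c_{0}),
\end{equation}
which is precisely the modified initial datum of the theorem. Substituting $\tilde c_{0}$ as the starting value of the A-B splitting and invoking the truncated Zassenhaus identity absorbs the first $p-1$ correction factors, so the discrepancy with the exact flow from $c_{0}$ is carried only by $\exp(\tau^{p+1}\hat C_{p+1})$ together with cross terms coming from the non-commutativity of $\exp(\tau\hat A)\exp(\tau\hat B)$ with the remaining correction exponentials. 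Collecting those contributions gives $\rho_{n}=\hat C_{T}\tau_{n}^{p+1}+\mathcal O(\tau_{n}^{p+2})$, with $\hat C_{T}$ a polynomial in nested nonlinear brackets of $\hat A$ and $\hat B$, as claimed.

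The principal obstacle is analytic rather than algebraic: once linearization is invoked, the identity is just the standard operator Zassenhaus formula, whereas the theorem asserts a genuine $\mathcal O(\tau^{p+2})$ remainder rather than merely a formal power-series identity. To close this gap I would use the boundedness of $A$ and $B$ together with sufficient Fr\'echet differentiability so that each $\hat C_{j}(I)(c_{0})$ is a well-defined element of $S$, and then apply Taylor's theorem with integral remainder to both the exact flow $\Phi_{A+B}^{\tau}$ started at $c_{0}$ and the composed flow $\Phi_{B}^{\tau}\circ\Phi_{A}^{\tau}$ started at $\tilde c_{0}$. The truncation error then reduces to a uniform bound on the $(p+2)$-th time derivative of these two flows on a short interval around $t=0$, which follows from the standing hypotheses; this step parallels the linear argument of \cite{scholz06}, with the extra care that here the correction factors $\exp(\hat C_{j}t^{j})$ act on $c_{0}$ via genuine nonlinear flows.
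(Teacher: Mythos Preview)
Your proposal is correct and follows essentially the same route as the paper: linearize via $F\mapsto\hat F$, invoke the Zassenhaus product formula in the resulting linear operator algebra, and read off the improved local error after feeding the corrected datum through the two sub-flows. The paper's own argument is considerably terser---it simply writes the composed flow $\exp(t\hat B)(I)\,\exp(t\hat A)(I)\,\bigl(\prod_{j=2}^{p}\exp(\hat C_j t^{j})\bigr)(I)(c_0)$ and appeals to the Zassenhaus formula in one line---so your explicit treatment of the Lie-algebra homomorphism and of the analytic remainder bound goes beyond what the paper actually supplies.
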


\begin{proof}
Let us consider the subinterval [0, t], where $\tau=t$, the solution
of the subproblem (\ref{seqsp1}) is:
\begin{eqnarray}
  c^{*}( t) & =& \exp(t \hat{A})(I)(c_0)
\end{eqnarray}
after improving the initialization we have
\begin{eqnarray}
  c^{*}(t) & =& \exp(t \hat{A})(I) ( \pi_{j=2}^{p} \exp(\hat{C_j} t^j))(I)(c_0)
\end{eqnarray}
the solution of the subproblem (\ref{seqsp2}) becomes
 \begin{eqnarray}
  c^{**}(t) & =& \exp (t \hat{B})(I) \, \exp(t \hat {A})(I) ( \pi_{j=2}^{p} \exp(\hat{C_j} t^j))(I) c_0\\
                  & =& (\exp(\tau_n (\hat{B}+\hat{A}))(I)(c_0) + \mathcal{O}(t^{p+1})\nonumber
\end{eqnarray}
with the help of the Zassenhaus product formula.
\end{proof}

\begin{remark}
For example, the second order A-B splitting after improving the
initialization is
\begin{eqnarray}
  c^{**}(t) & =& \bigg(\exp (t \hat{B})(I) \, \exp(t \hat{A})(I) \, \exp ( - \frac{1}{2} t^2 [\hat{B}, \hat{A}])(I) \bigg)  (c_0) \nonumber \\
                  & =&  \exp (t (\hat{B} + \hat{A}))(I)(c_0) + \mathcal{O}(t^{3})
\end{eqnarray}
and the third order A-B splitting after improving the initialization
is
\begin{eqnarray}
  c^{**}(t) & =& \bigg( \exp (t \hat{B})(I)\, \exp(t \hat{A})(I) \, \exp ( - \frac{1}{2} t^2 [B, A])(I) \, \exp ( \frac{1}{6} t^3 [\hat{B},[\hat{B}, \hat{A}]](I)  \nonumber \\ 
&& - \frac{1}{3} [\hat{A},[\hat{A},\hat{B}]](I) ) \bigg) (c_0)  \\
                  & =& \exp (t (\hat{B} + \hat{A}))(I)(c_0) + \mathcal{O}(t^{4}),
\end{eqnarray}
where the commutator is given 
as $[A, B](c) = (B'(c)  \circ A)(c) - (A'(c)  \circ B)(c) $.

\end{remark}

\begin{remark}
The same idea can also be done with the Strang-Splitting method, see the linear
case in \cite{geiser2011}.
\end{remark}

\section{Alternative Approaches: Iterative Schemes for Linearization}

In the following, we discuss the fixed point iteration and Newton's method
as alternative approaches to linearize the nonlinear problems.

We solve the nonlinear problem:
\begin{eqnarray}
\label{nonlinear}
 F(x) = 0 ,
\end{eqnarray}
where $F: \R^n \rightarrow \R^n$.

\subsection{Fixed-point iteration}

The nonlinear equations can be formulated as fixed-point problems:
\begin{eqnarray}
\label{fix}
 x = K(x) ,
\end{eqnarray}
where $K$ is the fixed-point map and is nonlinear, e.g. $K(x) = x - F(x)$.

A solution of (\ref{fix}) is called fix-point of the map $K$.

The fix-point iteration is given as:
\begin{eqnarray}
\label{fix}
 x_{i+1} = K(x_i) ,
\end{eqnarray}
and is called {\it nonlinear Richardson iteration}, {\it Picard iteration},
or {\it the method of successive substitution}.

\begin{definition}
Let $\Omega \le \R^n$ and let $G: \Omega \rightarrow \R^m$.
$G$ is Lipschitz continuous on $\Omega$ with Lipschitz constant $\gamma$ if 
\begin{eqnarray}
\label{fix_2}
|| G(x) - G(y) || \le \gamma || x - y || ,
\end{eqnarray}
for all $x, y \in \Omega$.
\end{definition}

For the convergence we have to assume that $K$ be a contraction map on $\Omega$
with Lipschitz constant $\gamma < 1$.

\begin{example}

We apply the fix-point iterative scheme to decouple the
non-separable Hamiltonian problem.

\begin{eqnarray}
& &{\bf \dot q}_i = \frac{\partial H}{\partial \bp}(\bp_i,  \bq_{i-1}) , \\
& & {\bf \dot p}_i = - \frac{\partial H}{\partial \bq}(\bp_{i-1}, \bq_i) ,
\end{eqnarray}

while we have the initial condition for the fix-point iteration:

$ (\bp_0,  \bq_0) = (\bp(t^n),  \bq(t^n))$

we assume that we have convergent results after $i = 1 \ldots, m$ iterative steps or with the stopping criterion:

$ \max (|| \bp_{i+1} - \bp_{i} || , || \bq_{i+1} - \bq_i ||) \le err $,

while $|| \cdot ||$ is the Euclidean norm (or a simple vector-norm, e.g. $L_2$). 

\end{example}

\subsection{Newton's method}

We solve the nonlinear operator equation (\ref{nonlinear}).

While $F: D \subset X \rightarrow Y$ with the Banach spaces $X, Y$
is given with the norms $|| \cdot ||_X$ and $|| \cdot ||_Y$.
Let $F$ be at least once continuous differentiable,
further we assume $x_0$ is a starting solution of the
unknown solution $x^*$.  

Then the {\it successive linearization} lead to the
general Newton's method:
\begin{eqnarray}
\label{newton}
 F'(x_i) \Delta x_i = - F(x_i) ,
\end{eqnarray}
where $\Delta x_i = x_{i+1} - x_i$ and $i= 0,1,2, \ldots .$

The method derive the solution of a nonlinear problem by solving
{\it a sequence of linear problems of the same kind}.

\begin{remark}
The linearization methods can be applied to the MPE methods.
Non-separable Hamiltonian problems can decoupled to separable Hamiltonians,
see the ideas in \cite{chin2008_1}.
\end{remark}

\section{Numerical Examples}
\label{exp}

In the following section, we deal with experiments to
verify the benefit of our methods.
At the beginning, we propose introductory examples to
compare the methods. In the next examples, some ideas to 
applications in Burgers and Hamiltonian problems, are done.

\subsection{First Example: Burgers equation}

We deal with a 2D example where we can derive an
analytical solution.
\begin{eqnarray}
\label{test_1a}
&& \partial_{t} u  =  - u \partial_{x} u -  u \partial_y u + \mu ( \partial_{xx} u + \partial_{yy} u ) + f(x, y, t)
, \\
&& (x,y, t) \in \Omega \times [0, T]\nonumber\\
&& u(x, y, 0) = u_{\rm ana}(x,y,0),  \; (x,y) \in \Omega \\
&& \mbox{with} \; u(x,y,t) =  u_{\rm ana}(x,y,t) \; \mbox{on} \partial \Omega \times [0,T] , 
\end{eqnarray}
where $\Omega = [0,1] \times [0,1]$, $T = 1.25$, and $\mu$ is the viscosity.

The analytical solution 
is given as
\begin{eqnarray}
\label{test_2a}
&& u_{\rm ana}(x, y, t)  =  ( 1 + \exp(\frac{x + y - t}{2 \mu}))^{-1} ,
\end{eqnarray}
where $f(x,y,t) = 0$.\\

For the non-asymptotic case we compute the right-hand side as:
\begin{eqnarray}
\label{test_1b}
&& f(x, y, t) = - \partial_{t} u_{asym} - u_{asym} \partial_{x} u_{asym} -  u_{asym} \partial_y u_{asym} \\
&& + \mu ( \partial_{xx} u_{asym} + \partial_{yy} u_{asym} ) + f(x, y, t)\;
, \; \mbox{in} (x,y, t) \in [0,1]\times[0,1] \times [0, 1.25] \nonumber \\
&& u(x, y, 0) = u_{asym}(x,y,0),  (x,y) \in [0,1]\times[0,1] \\
&& \mbox{with} \; u(x,y,t) =  u_{asym}(x,y,t) \; \mbox{on} \partial \Omega \times [0,T] \; , 
\end{eqnarray}

We discretize with $\Delta x, \Delta y = 1/40$, $\Delta t = 0.001$.

\noindent The operators are given as: \\

$A(u)u =  - u \partial_{x} u -  u \partial_y u$, 
hence $A(u) = -u \partial_x - u \partial_y $ 
(the nonlinear operator), 

$B u = \mu ( \partial_{xx} u + \partial_{yy} u) + f(x, y, t)$ 
(the linear operator).\\

\noindent We apply the nonlinear Algorithm (\ref{itertwo_1})  to the first equation and obtain\\

$ A(u_{i-1}) u_{i} = - u_{i-i} \partial_x u_i - u_{i-i} \partial_y u_i$ and

$ B u_{i-1} =  \mu (\partial_{xx} + \partial_{yy} ) u_{i-1} + f$,\\

\noindent and we obtain linear operators, because $u_{i-1}$ is known from the
previous time step. \\

\noindent In the second equation we obtain by using Algorithm (\ref{itertwo_2}):\\

$ A(u_{i-1}) u_{i} = - u_{i-i} \partial_x u_i - u_{i-i} \partial_y u_i$ and

$ B u_{i+1} =  \mu (\partial_{xx} + \partial_{yy} ) u_{i+1} + f $,\\

\noindent and we have also linear operators.\\

\noindent The maximal error at end time $t = T$ is given as
\[{\rm err}_{\rm max} = |u_{\rm num} - u_{\rm ana}| = \max_{i= 1}^p |u_{\rm num}(x_i, t) - u_{\rm ana}(x_i, t) |,\]
the numerical convergence rate is given as 
\[ \rho = \log ({\rm err}_{h/2} / {\rm err}_{h} )/ \log(0.5) .\]
We have the following results, see Tables \ref{table_1} for different steps in time and space and different viscosities, see also the results in \cite{geiser_2010_1}.
\begin{table}[h!]
\begin{center}
\begin{tabular}{||c|c||c|c||c|c||}
\hline \hline
 $\Delta x=\Delta y$ & $\Delta t$& ${\rm err}_{L_1}$ & ${\rm err}_{\rm max}$ & $ \rho_{L_1} $& $ \rho_{\rm max} $  \\
 \hline 
1/10 & 1/10 &  0.0549  & 0.1867 &  & \\
1/20 & 1/10 &  0.0468  & 0.1599 & 0.2303 & 0.2234 \\
1/40 & 1/10 &  0.0418  & 0.1431 & 0.1630 & 0.1608 \\
 \hline 
1/10 & 1/20 &  0.0447  & 0.1626 &  & \\
1/20 & 1/20 &  0.0331  & 0.1215 & 0.4353 & 0.4210 \\
1/40 & 1/20 &  0.0262  & 0.0943 & 0.3352 & 0.3645 \\
 \hline 
1/10 & 1/40 &  0.0405  & 0.1551 &  & \\
1/20 & 1/40 &  0.0265  & 0.1040 & 0.6108 & 0.5768 \\
1/40 & 1/40 &  0.0181  & 0.0695 & 0.5517 & 0.5804 \\
\hline \hline
\end{tabular}\\
\caption{Numerical results for the Burgers equation with viscosity $\mu = 0.05$, initial condition $u_{0}(t) = c_n$, and two iterations per time step.}
\label{table_1}
\end{center}
\end{table}

\subsection{Separable Hamiltonian}

We deal with the evolution of any dynamical variable $u(\bq,\bp)$ (including $\bq$ and $\bp$ themselves)
is given by the
Poisson bracket,
\begin{equation}
\pa_tu(\bq,\bp)=
                 \Bigl(
		          {{\partial u}\over{\partial \bq}}\cdot
                  {{\partial H}\over{\partial \bp}}
				 -{{\partial u}\over{\partial \bp}}\cdot
                  {{\partial H}\over{\partial \bq}}
				                    \Bigr)=(A+B)u(\bq,\bp).
\label{peq}
\end{equation}

An example for a separable Hamiltonian is give as:
\begin{equation}
H(\bp,\bq)={\bp^2\over{2m}}+V(\bq),
\label{ham}
\end{equation}
$A$ and $B$ are Lie operators, or vector fields
\be
A=\bv\cdot\frac{\pa}{\pa\bq} \qquad B=\bac(\bq)\cdot\frac{\pa}{\pa\bv}
\la{shop} 
\ee
where we have abbreviated $\bv=\bp/m$ and $\bac(\bq)=-\nabla V(\bq)/m$.
The exponential operators $\e^{h A}$ and $\e^{h B}$ are then just shift operators. \\

 $S(h) = \e^{h/2 B} \e^{h A}  \e^{h/2 B}$  \\

That is also given as a Verlet-algorithm in the following scheme.

We start with $(\bq_0, \bv_0)^t = (\bq(t^{n}), \bv(t^{n}))^t $:

\begin{eqnarray}
(\bq_1, \bv_1)^t = \e^{h/2 B} (\bq_0, \bv_0)^t & = & ( I + \frac{1}{2} h \sum_i a(\bq) \frac{\partial}{\partial \bv_i}) (\bq_0, \bv_0)^t \\
& = & (\bq_0, \bv_0 + \frac{1}{2} h a(\bq_0) )^t ,
\end{eqnarray}

\begin{eqnarray}
(\bq_2, \bv_2)^t = \e^{h A} (\bq_1, \bv_1)^t & = & ( I + h \sum_i \bv_i \frac{\partial}{\partial \bq_i}) (\bq_1, \bv_1)^t \\
& = & (\bq_1 + h \bv_1 , \bv_1 )^t , 
\end{eqnarray}

\begin{eqnarray}
(\bq_3, \bv_3)^t = \e^{h/2 B} (\bq_2, \bv_2)^t & = & ( I + \frac{1}{2} h \sum_i a(\bq) \frac{\partial}{\partial \bv_i}) (\bq_2, \bv_2)^t \\
& = & (\bq_2, \bv_2 + \frac{1}{2} h a(\bq_1) )^t .
\end{eqnarray}

And the substitution is given the algorithm for one time-step $n \rightarrow n+1$:
\begin{eqnarray}
(\bq_3, \bv_3)^t = ( \bq_0 + h \bv_0 + \frac{h}{2} a(\bq_0) ,  \bv_0 + \frac{h}{2} a(\bq_0) + \frac{h}{2} a(\bq_0 + h \bv_0 + \frac{h}{2} a(\bq_0)) )^t ,
\end{eqnarray}
while $(\bq(t^{n+1}), \bv(t^{n+1}))^t = (\bq_3, \bv_3)^t$.

\begin{remark}
Here, we linearize with respect to the time-steps and assume to compute
a large time sequence.

The numerical error is given with $\O(h^2)$ based on the second order approaches
of the Strang-splitting.

\end{remark}

\section{Conclusions and Discussions }
\label{conc}

We have presented novel MPE approaches to nonlinear differential
equations. Based on the ideas of iterative splitting schemes, 
we could linearize
the numerical scheme and apply the linear MPE approach..
Numerical examples confirm the applications to nonlinear equations.
In the future we will focus us on the development of improved
MPE methods with respect to non-separable Hamiltonian problems.

\bibliographystyle{plain}

\end{document}